\newtheorem{theorem}{Theorem}[section]
\theoremstyle{definition}
\theoremstyle{remark}
\numberwithin{equation}{section}
\begin{document}
\setcounter{page}{1}

\title[  A brief description of  pseudo-multipliers on Schatten-von Neumann classes ]{ A brief description of  operators associated to the quantum harmonic oscillator on Schatten-von Neumann classes}

\author[D. Cardona]{Duv\'an Cardona}
\address{
  Duv\'an Cardona:
  \endgraf
  Department of Mathematics  
  \endgraf
  Pontificia Universidad Javeriana.
  \endgraf
  Bogot\'a
  \endgraf
  Colombia
  \endgraf
  {\it E-mail address} {\rm duvanc306@gmail.com;
cardonaduvan@javeriana.edu.co}
  }

\subjclass[2010]{Primary {81Q10 ; Secondary 47B10, 81Q05}.}

\keywords{ Harmonic oscillator, Fourier multiplier, Hermite multiplier, nuclear operator, traces}

\begin{abstract}
In this note we  study pseudo-multipliers associated to the harmonic oscillator (also called Hermite multipliers) belonging to Schatten classes on $L^2(\mathbb{R}^n)$.  We also investigate the spectral  trace of these operators.
\textbf{MSC 2010.} Primary {81Q10 ; Secondary 47B10, 81Q05}.
\end{abstract} \maketitle
\section{Introduction}

\subsection{Outline of the paper} Pseudo-multipliers and multipliers associated to the harmonic oscillator arise from the study of Hermite expansions for complex functions on $\mathbb{R}^n$ (see Thangavelu \cite{Thangavelu,Thangavelu2,Thangavelu3,Thangavelu4,thangavelu5,Thangavelu6}, Epperson \cite{Epperson} and Bagchi and Thangavelu  \cite{BagchiThangavelu}). In this note, we are interested in the membership  of  pseudo-multipliers associated to the harmonic oscillator (also called Hermite pseudo-multipliers) in the Schatten classes, $S_{r}(L^2)$  on $L^2(\mathbb{R}^n)$. With this paper we finish the classification of pseudo-multipliers in classes of $r$-nuclear operators on $L^p$-spaces (see Barraza and Cardona \cite{BarrazaCardona,CardonaBarraza2}), which on $L^2(\mathbb{R}^n)$  coincide with the Schatten-von Neumann classes of order $r$ .  Our main result is Theorem \ref{MainTheorem} where we establish some criteria in order that  pseudo-multipliers belong to the classes $S_{r}(L^2),$ $0<r\leq 2$.  In order to present our main result we recall some notions. Let us consider the sequence of Hermite function on $\mathbb{R}^n,$
\begin{equation}
\phi_\nu=\Pi_{j=1}^n\phi_{\nu_j},\,\,\, \phi_{\nu_j}(x_j)=(2^{\nu_j}\nu_j!\sqrt{\pi})^{-\frac{1}{2}}H_{\nu_j}(x_j)e^{-\frac{1}{2}x_j^2}
\end{equation} 
where $x=(x_1,\cdots,x_n)\in\mathbb{R}^n$, $\nu=(\nu_1,\cdots,\nu_n)\in\mathbb{N}^n_0,$ and $H_{\nu_j}(x_j)$ denotes the Hermite polynomial of order $\nu_j$. It is well known that the Hermite functions provide a complete and orthonormal system in $L^2(\mathbb{R}^n).$ If we consider the operator $L=-\Delta+|x|^{2}$ acting on the Schwartz space $\mathscr{S}(\mathbb{R}^n),$ where $\Delta$ is the standard Laplace operator on $\mathbb{R}^n,$ then we have the relation 
$
L\phi_\nu=\lambda_\nu \phi_\nu,\,\,\nu\in\mathbb{N}_0^n.
$
The operator $L$ is symmetric and positive in $L^2(\mathbb{R}^n)$ and admits a self-adjoint extension $H$ whose domain is given by
\begin{equation}
\textnormal{Dom}(H)=\left\{\sum_{\nu\in\mathbb{N}_0^n}\langle f,\phi_\nu \rangle_{L^2}\phi_\nu:   \sum_{\nu\in\mathbb{N}_0^n}|\lambda_\nu\langle f,\phi_\nu \rangle_{L^2}|^2 <\infty \right\}. 
\end{equation} So, for $f\in \textnormal{Dom}(H), $ we have
\begin{equation}
(Hf)(x)=\sum_{\nu\in\mathbb{N}_{0}}\lambda_\nu\widehat{f}(\phi_\nu)\phi_\nu(x),\,\,\,\, \widehat{f}(\phi_\nu)=\langle f,\phi_\nu \rangle_{L^2}.
\end{equation}
The operator $H$  is precisely the quantum harmonic oscillator on $\mathbb{R}^n$ (see \cite{Prugovecki}). The sequence $\{\widehat{f}(\phi_v)\} $ determines  the Fourier-Hermite transform of $f,$ with corresponding inversion formula
\begin{equation}\label{inversion}
f(x)=\sum_{\nu\in \mathbb{N}^n_0}\widehat{f}(\phi_v)\phi_\nu(x).
\end{equation}
On the other hand, pseudo-multipliers are defined by the quantization process that associates to a function $m$  on $\mathbb{R}^n\times\mathbb{N}_0^n$ a linear operator $T_m$ of the form:
\begin{equation}\label{pseudo}
T_mf(x)=\sum_{\nu\in\mathbb{N}^n_0}m(x,\nu)\widehat{f}(\phi_\nu)\phi_\nu(x),\,\,\,\,f\in \textnormal{Dom}(T_m).
\end{equation}
The  function $m$ on $\mathbb{R}^n\times \mathbb{N}_0^n$ is called the symbol of the pseudo-multiplier $T_m.$ If in \eqref{pseudo}, $m(x,\nu)=m(\nu)$ for all $x,$ the operator $T_m$ is called a multiplier. Multipliers and pseudo-multipliers have been studied, for example, in the works \cite{BagchiThangavelu,stempak,stempak1,stempak2,Thangavelu,Thangavelu2} (and references therein) principally by its mapping properties on $L^p$ spaces. In order that the operator $T_m:L^{p_1}(\mathbb{R}^n)\rightarrow L^{p_2}(\mathbb{R}^n)$ belongs to the Schatten class $S_r(L^2)$, in this paper we  provide some conditions on the symbol $m.$

\subsection{Pseudo-multipliers in Schatten classes}  By following A. Grothendieck \cite{GRO}, we can recall that a linear operator $T:E\rightarrow F$  ($E$ and $F$ Banach spaces) is  $r$-nuclear, if
there exist  sequences $(e_n ')_{n\in\mathbb{N}_0}$ in $ E'$ (the dual space of $E$) and $(y_n)_{n\in\mathbb{N}_0}$ in $F$ such that
\begin{equation}\label{nuc}
Tf=\sum_{n\in\mathbb{N}_0} e_n'(f)y_n,\,\,\, \textnormal{ and }\,\,\,\sum_{n\in\mathbb{N}_0} \Vert e_n' \Vert^r_{E'}\Vert y_n \Vert^r_{F}<\infty.
\end{equation}
\noindent The class of $r-$nuclear operators is usually endowed with the quasi-norm
\begin{equation}
n_r(T):=\inf\left\{ \left\{\sum_n \Vert e_n' \Vert^r_{E'}\Vert y_n \Vert^r_{F}\right\}^{\frac{1}{r}}: T=\sum_n e_n'\otimes y_n \right\}
\end{equation}
\noindent  In addition, when $E=F$ is a Hilbert space and $r=1$ (resp. $r=2$) the definition above agrees with the concept of  trace class operators (resp. Hilbert-Schmidt). For the case of Hilbert spaces $H$, the set of $r$-nuclear operators agrees with the Schatten-von Neumann class of order $r$ (see Pietsch  \cite{P,P2}). We recall that a linear operator $T$ on a Hilbert space $H$ belong to the Schatten class of order $r,$ $S_r(H)$ if
\begin{equation}
s_{r}(T):=\sum_{n\in\mathbb{N}_0}\lambda_n(T)^r<\infty,
\end{equation} where $\{\lambda_n(T)\}$ denotes the sequence of singular values of $T,$ which are the eigenvalues of the operator $\sqrt{T^*T}.$ It was proved in  \cite{BarrazaCardona} that a multiplier $T_m$ with symbol satisfying  conditions of the form
\begin{itemize}
\item 
\begin{equation}
\varkappa(m,p_1,p_2):=\sum_{s=0}^n\sum_{\nu\in I_s}\alpha_{r,p_1,p_2}(s,\nu)|m(\nu)|^r<\infty,
\end{equation}
\end{itemize}
where $\{I_s\}_{s=0}^n$ is a suitable partition of $\mathbb{N}_0^n,$ and $\alpha_{r,p_1,p_2}(s,\nu)$ is a suitable kernel, can be extended to a $r$-nuclear operator from $L^{p_1}(\mathbb{R}^n)$ into $L^{p_2}(\mathbb{R}^n).$  
Although is easy to see that similar necessary conditions apply for pseudo-multipliers, the $r$-nuclearity for these operators in $L^p$-spaces was characterized in \cite{CardonaBarraza2} by the following condition,
\begin{itemize}
\item a pseudo-multiplier $T_{m}$ can be extended to a $r$-nuclear operator from $L^{p_1}$ into $L^{p_2}$ if and only if there exist functions $h_k$ and $g_k$ satisfying \begin{equation}\label{admit}
m(x,\nu)=\phi_{\nu}(x)^{-1}\sum_{k=1}^{\infty}h_{k}(x)\widehat{g}(\phi_\nu),\,\,\,a.e.w.x,\textnormal{ with } 
\sum_{k=0}^{\infty}\Vert g_k\Vert^r_{L^{p_1'}}\Vert h_{k}\Vert^r_{L^{p_2}}<\infty.
\end{equation}
\end{itemize}
If we consider  $p_1=p_2=2,$ and a multiplier $T_m,$ the conditions above can be replaced by the following more simple one,
\begin{equation}
\varkappa(m,2,r):=\sum_{\nu\in\mathbb{N}_0}|m(\nu)|^r<\infty,
\end{equation}
because the set of singular values of a multiplier $T_m$ consists of the elements in the sequence $\{|m(\nu)|\}_{\nu\in\mathbb{N}_0^n}.$ The condition \eqref{admit} characterizes the membership of pseudo-multipliers in Schatten classes in terms of the existence of certain measurable functions. However, in this paper we provide explicit conditions on $m$ in order to guarantee that $T_m\in S_r(L^2),$ because explicit conditions allow us to known information about the distribution of the spectrum of these operators. Our main result is the following theorem.
\begin{theorem}\label{MainTheorem} Let $T_m$ be a pseudo-multiplier with symbol $m$ defined on $\mathbb{R}^n\times \mathbb{N}_0^n.$ Then we have,
\begin{itemize}
\item $T_m$ is a Hilbert-Schmidt operator on $L^2(\mathbb{R}^n),$ i.e., $T_m\in S_2(L^2),$ if and only if
\begin{equation}
\sum_{\nu\in\mathbb{N}_0^n}\int_{\mathbb{R}^n}|m(x,\nu)|^2\phi_\nu(x)^2dx<\infty. 
\end{equation}
\item If $T_m$ is a positive and self-adjoint operator, then $T_m$ is trace class, i.e., $T_m\in S_1(L^2),$  if and only if
\begin{equation}
\sum_{\nu\in\mathbb{N}_0^n}\int_{\mathbb{R}^n}m(x,\nu)\phi_\nu(x)^2dx<\infty.
\end{equation}\label{0r1}
\item $T_m\in S_{r}(L^2),$ $0<r\leq 1,$ if 
\begin{equation}
\sum_{\nu\in\mathbb{N}_0^n}\left(\int_{\mathbb{R}^n}|m(x,\nu)|^2\phi_{\nu}(x)^2dx\right)^{\frac{r}{2}}<\infty.
\end{equation}
\item If $1<r< 2$ and there exists $\sigma>n(\frac{1}{r}-\frac{1}{2})$ such that
\begin{equation}\label{1r2}
\sum_{\nu\in\mathbb{N}_0^n}|\nu|^{2\sigma}\int_{\mathbb{R}^n}|m(x,\nu)|^2\phi_\nu(x)^2dx<\infty,
\end{equation}then $T_m\in S_{r}(L^2).$
\end{itemize}
 \end{theorem}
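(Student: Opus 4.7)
The common starting point is that $T_m$ admits the integral kernel
$$K(x,y)=\sum_{\nu\in\mathbb{N}_0^n}m(x,\nu)\phi_\nu(x)\phi_\nu(y),$$
obtained by substituting the definition of $\widehat{f}(\phi_\nu)$ into \eqref{pseudo}; equivalently, writing $m_\nu(x):=m(x,\nu)\phi_\nu(x)$, one has the expansion $T_mf=\sum_\nu\langle f,\phi_\nu\rangle_{L^2}\,m_\nu$, and in particular $T_m\phi_\mu=m(\cdot,\mu)\phi_\mu$ for every $\mu\in\mathbb{N}_0^n$.

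For item (i), Parseval's identity applied to $K(x,\cdot)$ in the orthonormal Hermite basis gives $\int|K(x,y)|^2\,dy=\sum_\nu|m(x,\nu)|^2\phi_\nu(x)^2$, and integrating in $x$ identifies the stated sum with $\|T_m\|_{HS}^2$; as this is an equality, the condition is necessary and sufficient. For item (ii), the diagonal matrix elements in the Hermite basis are $\langle T_m\phi_\nu,\phi_\nu\rangle=\int m(x,\nu)\phi_\nu(x)^2\,dx$, and for a positive self-adjoint operator the trace equals the sum of these quantities and is finite precisely when $T_m\in S_1(L^2)$. For item (iii) with $0<r\leq 1$, the representation above is itself an $r$-nuclear decomposition of $T_m$ on $L^2$, so the definition of the $r$-nuclear norm in \eqref{nuc} yields
$$n_r(T_m)^r\leq\sum_\nu\|\phi_\nu\|_{L^2}^r\,\|m_\nu\|_{L^2}^r=\sum_\nu\Bigl(\int_{\mathbb{R}^n}|m(x,\nu)|^2\phi_\nu(x)^2\,dx\Bigr)^{r/2},$$
and on the Hilbert space $L^2$ the $r$-nuclear norm coincides with the Schatten $r$-norm.

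The substantive case is item (iv). My strategy is to factor $T_m=(T_mH^\sigma)\circ H^{-\sigma}$ and combine item (i) with the Schatten behaviour of negative powers of the harmonic oscillator. Since $H$ is diagonalised by $\{\phi_\nu\}$ with eigenvalues $\lambda_\nu=2|\nu|+n\sim(1+|\nu|)$, the composition $T_mH^\sigma$ is the pseudo-multiplier with symbol $\lambda_\nu^\sigma m(x,\nu)$, and hypothesis \eqref{1r2} together with item (i) gives $T_mH^\sigma\in S_2(L^2)$. Independently, $H^{-\sigma}$ has singular values $\lambda_\nu^{-\sigma}$, and counting the multiplicity $\binom{k+n-1}{n-1}\sim k^{n-1}$ of the eigenvalue $2k+n$ of $H$ shows that $H^{-\sigma}\in S_q(L^2)$ precisely when $q\sigma>n$. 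Choosing $q=2r/(2-r)$ so that $\tfrac1r=\tfrac12+\tfrac1q$, this threshold becomes $\sigma>n(1/r-1/2)$, which is exactly our hypothesis. Hölder's inequality for Schatten classes $\|AB\|_{S_r}\leq\|A\|_{S_2}\|B\|_{S_q}$ then yields $T_m\in S_r(L^2)$.

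I expect the main obstacle to be the exponent bookkeeping in this last item: reconciling $\lambda_\nu\sim(1+|\nu|)$ with the weight $|\nu|^{2\sigma}$ of \eqref{1r2} (the $\nu=0$ term being handled separately) and checking that the arithmetic $1/q=1/r-1/2$ matches the eigenvalue-counting threshold $q\sigma>n$ to deliver the sharp value $\sigma>n(1/r-1/2)$. By contrast, the first three items are essentially immediate consequences of Parseval's identity, of the $r$-nuclear representation produced by the Hermite expansion of $T_m$, and of the fact that $\{\phi_\nu\}$ diagonalises $T_m$ against itself in the sense that $T_m\phi_\mu=m(\cdot,\mu)\phi_\mu$.
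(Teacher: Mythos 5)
Your proposal is correct and follows essentially the same route as the paper: the Hilbert--Schmidt and trace computations in the Hermite basis using $T_m\phi_\nu=m(\cdot,\nu)\phi_\nu$, the $r$-nuclear decomposition $T_mf=\sum_\nu\langle f,\phi_\nu\rangle\, m(\cdot,\nu)\phi_\nu$ for $0<r\leq 1$, and the factorization $T_m=(T_mH^\sigma)H^{-\sigma}$ with the Schatten H\"older inequality $S_2\cdot S_{2r/(2-r)}\subset S_r$ for $1<r<2$. Your explicit multiplicity count $\binom{k+n-1}{n-1}\sim k^{n-1}$ for the convergence of $\sum_\nu(2|\nu|+n)^{-q\sigma}$ and your remark about the $\nu=0$ term are welcome clarifications of details the paper leaves implicit (and you correctly have $q=\tfrac{2r}{2-r}$ where the paper misprints the reciprocal).
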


\subsection{Related works}
Now, we include some references on the subject. Sufficient conditions for the  $r$-nuclearity of spectral multipliers associated to the harmonic oscillator, but, in modulation spaces and Wiener amalgam spaces have been considered by  J. Delgado, M. Ruzhansky and B. Wang in \cite{DRB,DRB2}. The Properties of these multipliers in $L^p$-spaces have been investigated in the references S. Bagchi, S. Thangavelu  \cite{BagchiThangavelu}, J. Epperson \cite{Epperson},  K. Stempak and J.L. Torrea \cite{stempak,stempak1,stempak2},  S. Thangavelu \cite{Thangavelu,Thangavelu2} and references therein. Hermite expansions for distributions can be found in B. Simon \cite{Simon}. The $r$-nuclearity and Grothendieck-Lidskii formulae for multipliers and other types of integral operators can be found in \cite{D3,DRB2}.   On Hilbert spaces the class of $r$-nuclear operators agrees with the Schatten-von Neumann class $S_{r}(H);$ in this context operators with integral kernel on Lebesgue spaces and, in particular, operators with  kernel acting of a special way with anharmonic oscillators of the form $E_a=-\Delta_x+|x|^a,$ $a>0,$ has been considered on  Schatten classes on $L^2(\mathbb{R}^n)$ in J. Delgado and M. Ruzhansky \cite{kernelcondition}.
The proof of our  results will be presented in the next section.

\section{Pseudo-multipliers in Schatten-von Neumann classes}

In this section we prove our main result for pseudo-multipliers $T_m$. Our criteria will be formulated in terms of the symbols $m.$ First, let us observe that every multiplier $T_m$ is an operator with kernel $K_m(x,y).$ In fact, straightforward computation show that
\begin{equation}\label{kernelpseudo}
T_mf(x) =\int_{\mathbb{R}^n}K_m(x,y)f(y)dy,\,\,K_m(x,y):=\sum_{\nu\in\mathbb{N}^n_0}m(x,\nu)\phi_\nu(x)\phi_{\nu}(y)
\end{equation}
for every  $f\in\mathscr{D}(\mathbb{R}^n).$ We will use the following result (see J. Delgado \cite{Delgado,D2}).
\begin{theorem}\label{Theorem1} Let us consider $1\leq p_1,p_2<\infty,$ $0<r\leq 1$ and let $p_1'$ be such that $\frac{1}{p_1}+\frac{1}{p_1'}=1.$ An operator $T:L^p(\mu_1)\rightarrow L^p(\mu_2)$ is $r$-nuclear if and only if there exist sequences $(g_n)_n$ in $L^{p_2}(\mu_2),$ and $(h_n)$ in $L^{q_1}(\mu_1),$ such that
\begin{equation}
\sum_{n}\Vert g_n\Vert_{L^{p_2}}^r\Vert h_n\Vert^r_{L^{q_1} } <\infty,\textnormal{        and        }Tf(x)=\int(\sum_ng_{n}(x)h_n(y))f(y)d\mu_1(y),\textnormal{   a.e.w. }x,
\end{equation}
for every $f\in {L^{p_1}}(\mu_1).$ In this case, if $p_1=p_2$ $($\textnormal{see Section 3 of} \cite{Delgado}$)$ the nuclear trace of $T$ is given by
\begin{equation}\label{trace1}
\textnormal{Tr}(T):=\int\sum_{n}g_{n}(x)h_{n}(x)d\mu_1(x).
\end{equation}
\end{theorem}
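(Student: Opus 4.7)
The plan is to translate Grothendieck's abstract definition of $r$-nuclearity, recalled in \eqref{nuc}, into the concrete integral-kernel form by exploiting the standard $L^p$-duality $(L^{p_1}(\mu_1))'\cong L^{p_1'}(\mu_1)$ (available because $1\leq p_1<\infty$), and then exchanging sums with integrals by Fubini--Tonelli.

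For the ``only if'' direction I would start from a nuclear representation $Tf=\sum_n e_n'(f)y_n$ with $e_n'\in (L^{p_1}(\mu_1))'$, $y_n\in L^{p_2}(\mu_2)$ and $\sum_n\|e_n'\|^r\|y_n\|^r<\infty$. Riesz representation identifies each $e_n'$ with a unique $h_n\in L^{p_1'}(\mu_1)$ satisfying $e_n'(f)=\int h_n f\,d\mu_1$ and $\|e_n'\|=\|h_n\|_{L^{p_1'}}$; setting $g_n:=y_n$ yields the required control on norms. The crucial passage is pulling the sum inside the integral to expose the kernel $\sum_n g_n(x)h_n(y)$. Here the hypothesis $0<r\leq 1$ is essential: the elementary embedding $\ell^r\hookrightarrow\ell^1$ upgrades $r$-summability to $1$-summability of $\|g_n\|_{L^{p_2}}\|h_n\|_{L^{p_1'}}$, and Hölder on the product measure then furnishes the $L^1$-majorant that legitimizes Fubini--Tonelli and the desired interchange.

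The converse is the same construction read backwards: given the kernel representation with $\sum_n\|g_n\|^r\|h_n\|^r<\infty$, define $e_n'\in (L^{p_1}(\mu_1))'$ by $f\mapsto \int h_n f\,d\mu_1$, put $y_n:=g_n$, and observe that the summability assumption is precisely the estimate in \eqref{nuc} for the factorization $T=\sum_n e_n'\otimes y_n$.

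For the trace formula when $p_1=p_2$ I would invoke the general fact that on a Banach space the nuclear trace of a $1$-nuclear (a fortiori $r$-nuclear with $0<r\leq 1$) operator equals $\sum_n e_n'(y_n)$ independently of the factorization, combine it with $e_n'(g_n)=\int h_n g_n\,d\mu_1$, and perform one more absolute-convergence-plus-Fubini swap to obtain \eqref{trace1}. The genuine obstacle is this independence-of-representation step: on a general Banach space it is a nontrivial consequence of Grothendieck's approximation-property machinery, and it is exactly the content that is delegated to Section~3 of \cite{Delgado}. Everything else in the argument is bookkeeping with duality and dominated convergence.
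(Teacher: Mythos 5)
The first thing to say is that the paper contains no proof of this statement: it is quoted as a known result of Delgado (\cite{Delgado,D2}) and used as a black box, so there is no internal argument to compare yours against. Judged on its own, your reconstruction is essentially the correct, standard argument behind Delgado's theorem: identify each functional $e_n'$ with a function $h_n\in L^{p_1'}(\mu_1)$ by duality (you also, rightly, read the paper's $L^{q_1}$ as $L^{p_1'}$ and $T:L^p(\mu_1)\to L^p(\mu_2)$ as $T:L^{p_1}(\mu_1)\to L^{p_2}(\mu_2)$), use $0<r\leq 1$ so that $\ell^r\hookrightarrow\ell^1$ gives absolute summability of $\Vert g_n\Vert_{L^{p_2}}\Vert h_n\Vert_{L^{p_1'}}$, interchange sum and integral, and read the converse as the same identification run backwards. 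Two points deserve sharpening. First, the duality $(L^{p_1}(\mu_1))'\cong L^{p_1'}(\mu_1)$ at the endpoint $p_1=1$ requires $\sigma$-finiteness (or localizability) of $\mu_1$; Delgado's theorems are in fact stated under such hypotheses, which the paper's formulation silently drops, and your appeal to Riesz representation inherits this restriction. Second, the Fubini--Tonelli step is not literally an $L^1(\mu_2\times\mu_1)$ bound on the kernel: with infinite measures, $\Vert g_n\otimes h_n\Vert_{L^1(\mu_2\times\mu_1)}=\Vert g_n\Vert_{L^1}\Vert h_n\Vert_{L^1}$ can be infinite. The clean justification fixes $f\in L^{p_1}(\mu_1)$ and works on $\mathbb{N}\times X_1$ with the majorant $\sum_n|g_n(x)|\,\Vert h_n\Vert_{L^{p_1'}}\Vert f\Vert_{L^{p_1}}$, which is finite for $\mu_2$-a.e.\ $x$ because that series converges in $L^{p_2}(\mu_2)$; so your phrase ``H\"older on the product measure'' should be understood in this pointwise-in-$x$ sense. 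Finally, you correctly isolate the only genuinely nontrivial ingredient of the trace formula \eqref{trace1}: the independence of $\sum_n e_n'(y_n)$ from the chosen representation, which rests on the (metric) approximation property of $L^p(\mu)$ and is precisely what the citation to Section 3 of \cite{Delgado} supplies. With those two technical caveats made explicit, your outline is a faithful proof of the cited theorem.
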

Now, we prove our main theorem.

\begin{proof}[Proof of Theorem \ref{MainTheorem}]
Let us consider a pseudo-multiplier $T_m.$ By definition, $T_m$ is a Hilbert-Schmidt operator if and only if there exists an orthonormal basis $\{e_\nu\}_\nu$ of $L^2(\mathbb{R}^n)$ such that
\begin{equation}
\sum_{\nu}\Vert T_m e_\nu \Vert_{L^2}^2<\infty.
\end{equation} In particular, if we choose the system of Hermite functions $\{\phi_\nu\},$ which provides an orthonormal basis of $L^2(\mathbb{R}^n),$ from the relation
$ T_m(\phi_\nu)=m(x,\nu)\phi_\nu ,$ we conclude that $T_m$ is of Hilbert-Schmidt type, if and only if
\begin{equation}
\sum_{\nu}\Vert m(\cdot,\nu)\phi_\nu \Vert_{L^2}^2=\sum_{\nu\in\mathbb{N}_0^n}\int_{\mathbb{R}^n}|m(x,\nu)|^2\phi_\nu(x)^2dx<\infty.
\end{equation} So, we have proved the first statement. Now, if we assume that $T_m$ is positive and self-adjoint, then $T_m$ is of class trace  if and only if there exists an orthonormal basis $\{e_\nu\}_\nu$ of $L^2(\mathbb{R}^n)$ such that
\begin{equation}
\sum_{\nu}\langle T_m e_\nu ,e_\nu\rangle_{L^2}<\infty.
\end{equation} As in the first assertion, if we choose the basis formed by the Hermite functions, $T_m$ is of class trace if and only if
\begin{equation}
\sum_{\nu}\langle T_m e_\nu ,e_\nu\rangle_{L^2}=\sum_{\nu\in\mathbb{N}_0^n}\int_{\mathbb{R}^n}m(x,\nu)\phi_\nu(x)^2dx<\infty,
\end{equation} which proves the second assertion. Now, we will verify that \eqref{0r1} implies that $T_m\in S_r(L^2)$ for $0<r\leq 1.$ For this, we will use Delgado's Theorem (Theorem \ref{Theorem1}) to the representation
\eqref{kernelpseudo} of $K_m$
\begin{equation}
K_m(x,y):=\sum_{\nu\in\mathbb{N}^n_0}m(x,\nu)\phi_\nu(x)\phi_{\nu}(y).
\end{equation}
So, $T_m\in S_{r}(L^2)$ if
\begin{equation}
\sum_\nu \Vert m(\cdot,\nu)\Vert_{L^2}^r\Vert \phi_\nu\Vert_{L^2}^r=\sum_{\nu\in\mathbb{N}_0^n}\left(\int_{\mathbb{R}^n}|m(x,\nu)|^2\phi_{\nu}(x)^2dx\right)^{\frac{r}{2}}<\infty,
\end{equation}
where we have used that the $L^2-$norm of every Hermite function $\phi_\nu$ is normalised. In order to finish the proof, we only need to prove that \eqref{1r2} assures that $T_m\in S_r(L^2)$ for $1<r<2.$ This can be proved by using the following multiplication property on Schatten classes:
\begin{equation}
S_p(H)S_q(H)\subset S_{r}(H),\,\,\,\frac{1}{r}=\frac{1}{p}+\frac{1}{q}.
\end{equation}
So, we will factorize $T_m$ as
\begin{equation}
T_{m}=T_{m}H^\sigma H^{-\sigma}, \,\,\,\sigma>0,
\end{equation} where $H$ is the harmonic oscillator. Let us note that the symbol of $A=T_m H^\sigma$ is given by $a(x,\nu)=m(x,\nu)(2|\nu|+n)^\sigma.$ So, from the second assertion, $A\in S_{2}(L^2)$ if and only if
$$ \sum_{\nu\in\mathbb{N}_0^n}|\nu|^{2\sigma}\int_{\mathbb{R}^n}|m(x,\nu)|^2\phi_\nu(x)^2dx\asymp   \sum_{\nu\in\mathbb{N}_0^n}(2|\nu|+n)^{2\sigma}\int_{\mathbb{R}^n}|m(x,\nu)|^2\phi_\nu(x)^2dx<\infty. $$ In order to prove that $T_m\in S_{r}{(L^2)},$ in view of the multiplication property
\begin{equation}
S_{2}(L^2)S_{\frac{2r}{2-r}}(L^2)\subset S_{r}(L^2),
\end{equation}
we only need to prove that $H^{-\sigma}\in S_{p}{(L^2)} $ with $p=\frac{2-r}{2r}.$  The symbol of $H^{-\sigma}$ is given by $a'(\nu)=(2|\nu|+n)^{-\sigma}.$ By using the hypothesis $\sigma>n(\frac{1}{r}-\frac{1}{2})$ we have that
$$\sum_{\nu} |a'(\nu)|^{p}=\sum_{\nu}(2|\nu|+n)^{-\sigma p}<\infty $$ because $\sigma p=\sigma(\frac{1}{r}-\frac{1}{2})^{-1}>n.$ So, we finish the proof.
\end{proof}

\subsection{Trace class pseudo-multipliers of the harmonic oscillator} In order to determinate a relation with the eigenvalues of $T_m$ we recall the following result (see \cite{O}).
\begin{theorem} Let $T:L^p(\mu)\rightarrow L^p(\mu)$ be a $r$-nuclear operator as in \eqref{nuc}. If $\frac{1}{r}=1+|\frac{1}{p}-\frac{1}{2}|,$ then, 
\begin{equation}
\textnormal{Tr}(T):=\sum_{n\in\mathbb{N}^n_0}e_n'(f_n)=\sum_{n}\lambda_n(T)
\end{equation}
where $\lambda_n(T),$ $n\in\mathbb{N}$ is the sequence of eigenvalues of $T$ with multiplicities taken into account. 
\end{theorem}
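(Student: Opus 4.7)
The plan is to establish the trace formula by combining Grothendieck's theory of $r$-nuclear operators on $L^p$ with the classical Lidskii theorem on Hilbert spaces. The first step is to check that the nuclear trace $\mathrm{Tr}(T) := \sum_{n} e_n'(y_n)$ associated to a representation $T = \sum_n e_n'\otimes y_n$ as in \eqref{nuc} is well-defined, i.e.\ independent of the chosen decomposition. The hypothesis $\frac{1}{r} = 1 + |\frac{1}{p} - \frac{1}{2}|$ is precisely Grothendieck's threshold on $L^p(\mu)$ that guarantees this independence: it corresponds to the critical exponent at which the approximation property of $L^p$ combined with H\"older-type tensor estimates forces the sum $\sum_n e_n'(y_n)$ to collapse to the same value regardless of representation. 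One can first verify the statement on finite-rank operators and then pass to the limit using the density of finite-rank operators in the $r$-nuclear quasi-norm.

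For the spectral identification I would introduce the Fredholm determinant $D(z) := \det(I - zT)$, constructed as an entire function through the exterior-power expansion
\begin{equation*}
D(z) = \sum_{k=0}^{\infty} (-z)^k\, \mathrm{Tr}(\Lambda^k T).
\end{equation*}
Under the stated condition on $r$ and $p$, the operator $T$ has summable eigenvalues (via Weyl-type inequalities for $r$-nuclear operators on $L^p$), so $D$ is well-defined and of order at most one, with zeros exactly $\{\lambda_n(T)^{-1}\}$ counted with multiplicity. Equating the linear coefficient in $z$ of $\log D(z)$ obtained from the exterior-power expansion on one hand, and from the Hadamard product $D(z) = \prod_n (1 - z\lambda_n(T))$ on the other, yields $\mathrm{Tr}(T) = \sum_n \lambda_n(T)$.

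The main obstacle I anticipate is controlling the summability of eigenvalues at the critical index, namely showing $\sum_n |\lambda_n(T)| < \infty$. For $p=2$ one has $r=1$ and this reduces to the classical Lidskii/Hilbert--Schmidt factorization; for $p\neq 2$ one must exploit the fact that $L^p$ factors through Hilbert space in a controlled way and interpolate between the Hilbert endpoint ($r=1$ on $L^2$) and Grothendieck's Banach endpoint ($r=2/3$), which is exactly what the identity $\frac{1}{r} = 1 + |\frac{1}{p}-\frac{1}{2}|$ encodes. Once this summability is in hand, the Fredholm-determinant matching argument of the previous paragraph closes the proof, and the trace formula \eqref{trace1} (instantiated on the $L^p$-diagonal) coincides with $\sum_n \lambda_n(T)$ as claimed.
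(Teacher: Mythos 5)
The paper does not actually prove this statement: it is quoted as a known theorem of Reinov and Latif \cite{O} (``we recall the following result''), so there is no internal argument to measure your proposal against. Judged on its own, your outline follows the classical Grothendieck--Fredholm route that the literature does use, but it has a genuine gap and one misattribution. The misattribution: for $r\leq 1$ the well-definedness of the nuclear trace on $L^p(\mu)$ already follows from the approximation property of $L^p$ alone; the critical relation $\frac{1}{r}=1+|\frac{1}{p}-\frac{1}{2}|$ is not what makes $\sum_n e_n'(y_n)$ representation-independent, it is what makes the \emph{spectral} identification possible.

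The genuine gap is precisely the step you flag as ``the main obstacle'' and then dispose of by assertion: the absolute summability of the eigenvalues of an $r$-nuclear operator on $L^p$ at the critical index, i.e.\ the Weyl-type inequality $\sum_n|\lambda_n(T)|<\infty$. This is the entire content of the Grothendieck--Lidskii theorem for $L^p$; once it is known, the Fredholm-determinant matching is routine. Your proposed justification --- ``interpolate between the Hilbert endpoint $r=1$ on $L^2$ and Grothendieck's Banach endpoint $r=2/3$'' --- does not work as stated, because the scale of $r$-nuclear ideals on the scale of $L^p$ spaces is not an interpolation family in any naive real or complex sense; the eigenvalue behaviour at intermediate $p$ cannot be deduced by interpolating the two endpoint statements. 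The actual arguments (Reinov--Latif, resting on Grothendieck and on the eigenvalue theorems of Johnson, K\"onig, Maurey and Retherford) obtain the critical exponent by factoring the operator through a diagonal operator between sequence spaces, respectively through a Hilbert space, with quantitative control of the factorization constants, and then applying Lidskii's theorem on the Hilbert-space factor together with the principle that related operators $AB$ and $BA$ have the same nonzero eigenvalues. Without supplying that factorization and the resulting eigenvalue estimate, your proposal is a correct roadmap rather than a proof.
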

As an immediate consequence of the preceding theorem, if $T_m:L^2(\mathbb{R}^n)\rightarrow L^2(\mathbb{R}^n)$ is a trace class ($1$-nuclear)  then, 
\begin{equation}
\textnormal{Tr}(T_m)=\int_{\mathbb{R}^n}\sum_{\nu\in\mathbb{N}_0^n}m(x,\nu)\phi_\nu(x)^2dx.=\sum_{n}\lambda_n(T),
\end{equation}
where $\lambda_n(T),$ $n\in\mathbb{N}$ is the sequence of eigenvalues of $T_m$ with multiplicities taken into account.\\
\\


\bibliographystyle{amsplain}

\end{document}